\newtheorem{prop}{Proposition}
\newtheorem{thm}{Theorem}
\newcommand{\ett}{{\bf 1}} 
\newcommand{\mR}{{\mathbb R}}
\DeclareMathOperator*{\argmax}{argmax}
\DeclareMathOperator*{\tr}{tr}
\DeclareMathOperator*{\ext}{ext}
\DeclareMathOperator*{\diag}{diag}
\DeclareMathOperator*{\ve}{vec}
\theoremstyle{plain}
\theoremstyle{definition}
\theoremstyle{remark}
\begin{document}

\title{Globally solving the Gromov-Wasserstein problem for point clouds in low dimensional Euclidean spaces\\
}

\author{
Martin Ryner \\
  Vironova AB, Stockholm, Sweden\\
  Division of Optimization and Systems Theory, \\
  Department of Mathematics,\\ 
  KTH Royal Institute of Technology, Stockholm, Sweden\\
  \texttt{martin.ryner@vironova.com, martinrr@kth.se} \\
  \\
    Jan Kronqvist\\
  Division of Optimization and Systems Theory, \\
  Department of Mathematics, 
  \\KTH Royal Institute of Technology, Stockholm, Sweden\\
   \texttt{jankr@kth.se}  \\
   \\
  Johan Karlsson\\
  Division of Optimization and Systems Theory, \\
  Department of Mathematics, 
  \\KTH Royal Institute of Technology, Stockholm, Sweden\\
   \texttt{johan.karlsson@math.kth.se}  
}
\date{}

\maketitle

\begin{abstract}

This paper presents a framework for computing the Gromov-Wasserstein problem between two sets of points in low dimensional spaces, where the discrepancy is the squared Euclidean norm.
The Gromov-Wasserstein problem is a generalization of the optimal transport problem that finds the assignment between two sets preserving pairwise distances as much as possible. This can be used to quantify the similarity between two formations or shapes, a common problem in AI and machine learning.
The problem can be formulated as a Quadratic Assignment Problem (QAP), which is in general computationally intractable even for small problems. Our framework addresses this challenge by reformulating the QAP as an optimization problem with a low-dimensional domain, leveraging the fact that the problem can be expressed as a concave quadratic optimization problem with low rank. The method scales well with the number of points, and it can be used to find the global solution for large-scale problems with thousands of points.
We compare the computational complexity of our approach with state-of-the-art methods on synthetic problems and apply it to a near-symmetrical problem which is of particular interest in computational biology. 
\end{abstract}

\section{Introduction}

Many important applications in machine learning deal with comparing sequences, images, and higher dimensional data, where the data is unstructured and not directly comparable. In physics, chemistry, biology, music, and linguistics, objects with greatly different properties often appear in symmetrical variations characterized by
concepts such as isomerisms, chirality, harmonies, and alternations. Understanding, and being able to analyze, these types of variations can be truly critical as some variations in chemicals and biologicals may be toxic or even lethal. The Gromov-Wasserstein framework \cite{peyre2016gromov} has shown to be a powerful approach for comparing and matching such data, as it is invariant to translations and rotation. Gromov-Wasserstein framework has, for example, been successfully applied to domain adaptation \cite{yan2018semi}, graph matching \cite{xu2019gromov}, metric alignment \cite{ezuz2017gwcnnfixed}, single-cell alignment \cite{demetci2020gromov}, and word embedding \cite{alvarez2018gromov}. 

The task of evaluating the Gromov-Wasserstein problem is in general considered to be intractable. Typically, the computational burden grows exponentially with the number of points describing the compared objects. In fact, a Gromov-Wasserstein problem can be formulated as quadratic assignment problem (QAP) \cite{koopmans1957assignment,burkard1998quadratic,burkard2012assignment}, which is known to be NP-Hard. Naturally, there has been plenty of research on local and approximate methods for solving Gromov-Wasserstein and QAP problems \cite{peyre2016gromov, solomon2016entropic,scetbon2022linear,xu2019scalable,beier2022linear,stiegler2018efficient}.
However, objects containing symmetries or repeated patterns are particularly challenging for local optimization methods and may lead to significant errors in the estimated discrepancy as matching such objects with local optimization methods may accidentally find the sub-optimal reflections and rotations. The inability to detect such phenomena can have a great impact on the discovery of isomerisms and subsequently attributes of crucial importance. 

In this paper, we develop a rigorous method for globally optimizing Gromov-Wasserstein problems by calculating a sequence of iteratively improving upper- and lower bounds.  We consider a general class of Gromov-Wasserstein discrepancy problems where the points, representing the objects, belong to a Euclidean space. We show that such  Gromov-Wasserstein problems can be formulated exactly as low-rank QAPs.
We build upon this low-rank QAP representation to develop an algorithm that scales well with the number of points. The proposed algorithm can be characterized as a so-called cutting plane method \cite{gomory1960solving,floudas2013deterministic}
where we solve a sequence of relaxed problems that are iteratively strengthened by generating and accumulating valid linear inequality constraints, \textit{i.e.,} cutting planes. The optimum of the relaxed problem provides a valid lower bound for the optimum of the Gromov-Wasserstein problem in each iteration. By solving a computationally cheap optimal transportation problem \cite{orlin1997polynomial, villani2003topics, cuturi2013sinkhorn}, we obtain both an upper-bound and a new cutting plane to strengthen the relaxation. We prove convergence for the proposed algorithm, and present a computational study that clearly shows the algorithm's efficiency and that the performance scales well with the number of points. 

The main contribution of the paper can be summarized as:
\begin{itemize}

\setlength{\itemsep}{3pt}
\setlength{\parskip}{3pt}

\item We identify a general class of Gromov-Wasserstein problems, for point clouds embedded in low dimensional Euclidean spaces, that can be exactly represented as a concave low-rank QAP. In particular, mappings of images fits well within our framework.

 \item We develop a method for solving this class of Gromov-Wasserstein problems by solving a sequence of alternating sub-problems, which are either low-dimensional or linear.

\item We prove that the proposed algorithm converges to a global optimal solution. 
 The algorithm produces an optimality certificate in each iteration, in the form of upper- and lower bounds, which informs us of the potential suboptimality if the algorithm is terminated early.
 
 \item We present a numerical study, showing the efficiency of the proposed algorithm by comparing to other global optimization methods. We also illustrate the importance of globally solving Gromov-Wasserstein problems on a problem in computational biology. 
\end{itemize}

In Section~\ref{sec:background} we introduce the 
Gromov-Wasserstein problems and how it can be written as a QAP. In Section~\ref{sec:GW_lowRankQAP} we identify a class of Gromov-Wasserstein discrepancy problems that can be written as a concave relaxed QAPs problem, and in Section~\ref{sec:lowrankstructure} we present the main methodology and an algorithm for solving this class of problems. Finally, in Section~\ref{sec:simulations} we present numerical results and an application in computational biology.

\section{The Gromov-Wasserstein discrepancy problem}\label{sec:background}
Let $x_1\ldots, x_n\in \mathcal{X}$ and $y_1\ldots, y_n\in \mathcal{Y}$ be two sets of points
and consider the problem of finding an assignment $\pi$ between the point sets such that the pairwise distances $d_\mathcal{X}(x_i, x_{i'})$ and $d_\mathcal{Y}(y_{\pi(i)}, y_{\pi(i')})$ are as close as possible for $i, i'=1,\ldots, n$, where $d_\mathcal{X}$ and $d_\mathcal{Y}$ represents a notion of distance on the sets $\mathcal{X}$ and $\mathcal{Y}$, respectively. 
This can be formulated as the Gromov-Wasserstein discrepancy problem \cite{peyre2016gromov}
\begin{align}\label{eq:GWmin}
\min_{\Gamma\in P} \; \frac{1}{2} \sum_{i, i', j,j'=1}^n (d_\mathcal{X}(x_i,x_{i'})-d_\mathcal{Y}(y_j,y_{j'}))^2 \Gamma_{i,j}\Gamma_{i',j'},
\end{align}
and where the assignment $\pi$ is represented by a permutation matrix $\Gamma$ and $P$ is the set of all $n\times n$ permutation matrices. 
In this formulation we note that 
\begin{align*}
&\sum_{i, i', j,j'=1}^n (d_\mathcal{X}(x_i,x_{i'})-d_\mathcal{Y}(y_j,y_{j'}))^2 \Gamma_{i,j}\Gamma_{i',j'}\\
&=\sum_{i, i', j,j'=1}^n (d_\mathcal{X}(x_i,x_{i'})^2-2d_\mathcal{X}(x_i,x_{i'})d_\mathcal{Y}(y_j,y_{j'})+d_\mathcal{Y}(y_j,y_{j'})^2) \Gamma_{i,j}\Gamma_{i',j'}\\
&=\langle C_x, C_x\rangle-2\langle C_x \Gamma,   \Gamma C_y\rangle+\langle C_y, C_y\rangle
\end{align*}
where $C_x=[d_{\mathcal{X}}(x_i,x_{i'})]_{i,i'=1}^n$,  $C_y=[d_{\mathcal{Y}}(y_j,y_{j'})]_{j,j'=1}^n$, and $\langle \cdot,\cdot \rangle$ denotes the standard (Frobenius) inner product.
Since the first two sums are independent of $\Gamma$, solving the Gromov-Wasserstein problem \eqref{eq:GWmin} is the same as solving a quadratic assignment problem (QAP) on a simplified Koopmans-Beckmann form \cite{burkard1998quadratic}, namely as
\begin{align}
\min_{\Gamma\in P} \quad &-\langle C_x \Gamma,   \Gamma C_y\rangle+\frac{1}{2}(\langle C_x, C_x\rangle+\langle C_y, C_y\rangle).\label{eq:GW}
\end{align}

This problem is NP-hard, and the number of variables scales with the number of data points, making \eqref{eq:GW} computationally intractable for problems of relevant size. Here, we focus on instances where the matrices $C_x, C_y$ are positive definite and low rank. By utilizing this structure, we develop an algorithm that is guaranteed to find a globally optimal solution and scales well with the number of points.

\section{The Gromov-Wasserstein problem and low rank QAP} \label{sec:GW_lowRankQAP}

An important special case of the Gromow-Wasserstein problem, considered in \cite{peyre2016gromov, scetbon2022linear}, is when 
 the point clouds belong to the Euclidean space and the squared Euclidean distance is used as discrepancy. That is, when the set of points are  $x_1\ldots, x_n\in \mR^{\ell_x}$ and $y_1\ldots, y_n\in \mR^{\ell_y}$, which we represent by the matrices
\begin{align*}
X=(x_1, x_2, \ldots, x_n) \in \mR^{\ell_x\times n}, \qquad
Y=(y_1, y_2, \ldots, y_n)\in \mR^{\ell_y\times n}.
\end{align*}
 In this case it can be noted that the distance matrices $C_x$ and $C_y$ has a rank bounded by $\ell_x+2$ and $\ell_y+2$, respectively, and can be written  as
\begin{subequations}\label{eq:C}
\begin{align}
C_x=(\|x_i-x_j\|^2_2)_{i,j=1}^n=\ett  m_x^T-2X^T X +m_x \ett^T,\\
C_y=(\|y_i-y_j\|^2_2)_{i,j=1}^n=\ett  m_y^T-2Y^T Y +m_y \ett^T,
\end{align}
\end{subequations}
where $m_x=(\|x_1\|^2, \|x_2\|^2, \ldots, \|x_n\|^2)^T$, $m_y=(\|y_1\|^2, \|y_2\|^2, \ldots, \|y_n\|^2)^T$, and
$\ett\in \mR^{n\times 1}$ is a column vector of ones.
This observation was also used in \cite{scetbon2022linear} for formulating the Gromov-Wasserstein problem as a quadratic problem of rank $(\ell_x+2)(\ell_y+2)$ and developing fast algorithms for the problem.
However, the rank can be even further reduced and the corresponding Gromov-Wasserstein problem can be formulated as a QAP problem of rank $\ell_x\ell_y$.

\begin{prop} \label{prop:cost_rank_reduction} Let $\Gamma$ be a doubly stochastic matrix and the matrices $C_x$ and $C_y$ given by \eqref{eq:C}, then it holds that
\begin{align*}
\langle C_x \Gamma,   \Gamma C_y\rangle= 
\langle  2X\Gamma Y^T, 2X\Gamma Y^T\rangle +\langle L, \Gamma\rangle +2\ett^Tm_y \ett^Tm_x,
\end{align*}
where $L=2n m_xm_y^T -4m_x\ett ^T Y^T Y-4X^T X\ett m_y^T$.

\end{prop}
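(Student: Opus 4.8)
The plan is to expand the bilinear form directly from the explicit low-rank expressions for $C_x$ and $C_y$ in \eqref{eq:C}, and then to collapse the resulting pieces using the doubly stochastic constraints. First I would rewrite $\langle C_x\Gamma,\Gamma C_y\rangle = \tr(\Gamma^T C_x^T\Gamma C_y)=\tr(\Gamma^T C_x\Gamma C_y)$, using that the distance matrices are symmetric. The two facts I will lean on throughout are the row- and column-sum identities $\Gamma\ett=\ett$ and $\ett^T\Gamma=\ett^T$ (equivalently $\Gamma^T\ett=\ett$), which hold precisely because $\Gamma$ is doubly stochastic.

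Next I would substitute the three-term decompositions $C_x=\ett m_x^T-2X^TX+m_x\ett^T$ and $C_y=\ett m_y^T-2Y^TY+m_y\ett^T$ and expand the product into its nine constituent traces $\tr(\Gamma^T A_i\Gamma B_j)$, where $A_1,A_2,A_3$ and $B_1,B_2,B_3$ denote the three summands of $C_x$ and $C_y$, respectively. The organizing principle is that every summand except $-2X^TX$ and $-2Y^TY$ carries a factor $\ett$ or $\ett^T$ that gets absorbed by the corresponding doubly stochastic identity; this is exactly what forces eight of the nine terms to reduce to expressions that are either linear in $\Gamma$ or constant.

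Carrying this out, I expect the single genuinely quadratic term to be the $A_2B_2$ pairing, $\tr(\Gamma^T(-2X^TX)\Gamma(-2Y^TY))=4\tr(\Gamma^TX^TX\Gamma Y^TY)$, which by the cyclic property of the trace equals $\langle 2X\Gamma Y^T,2X\Gamma Y^T\rangle$; this is the rank-$\ell_x\ell_y$ piece, since $X\Gamma Y^T$ is an $\ell_x\times\ell_y$ matrix. The pairings $A_1B_3$ and $A_3B_1$ each collapse to $n\langle m_xm_y^T,\Gamma\rangle$ after the $\ett$'s are absorbed and a factor $\tr(\ett\ett^T)=n$ (respectively $\ett^T\Gamma\ett=n$) is extracted, together producing the $2n\,m_xm_y^T$ block of $L$. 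The mixed pairings $A_1B_2$ and $A_3B_2$ collapse to $-4\,m_x^T\Gamma Y^TY\ett=\langle-4m_x\ett^TY^TY,\Gamma\rangle$, and symmetrically $A_2B_1$ and $A_2B_3$ give $\langle-4X^TX\ett m_y^T,\Gamma\rangle$; these are the remaining two blocks of $L$. Finally the pairings $A_1B_1$ and $A_3B_3$ retain no $\Gamma$-dependence and each evaluate to $(\ett^Tm_x)(\ett^Tm_y)$, summing to the constant $2\ett^Tm_y\,\ett^Tm_x$. Adding the five groups yields the claimed identity with $L=2n\,m_xm_y^T-4m_x\ett^TY^TY-4X^TX\ett m_y^T$.

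The computation is essentially routine bookkeeping, so the only real subtlety, and the place one must be careful, is tracking which of the two doubly stochastic identities applies to each $\ett$ and verifying the orientation (left versus right multiplication, together with the transposes) so that the quadratic term and the four linear/constant blocks land with the correct coefficients and signs. I do not anticipate any structural difficulty beyond this verification.
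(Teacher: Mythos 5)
Your proposal is correct and follows essentially the same route as the paper's proof: expanding $\tr(\Gamma^T C_x \Gamma C_y)$ via the three-term decompositions of $C_x$ and $C_y$, absorbing the $\ett$ factors with $\Gamma\ett=\ett$ and $\Gamma^T\ett=\ett$, and collecting the single quadratic term, the four linear terms forming $L$, and the two constant terms. Your nine-term bookkeeping (which pairings yield which blocks) matches the paper's computation exactly, merely organized term-by-term rather than in the paper's two-stage expansion.
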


\begin{proof}
The proposition follows by the following straightforward computations, where we just expand the expressions and use $\ett=\Gamma\ett=\Gamma^T\ett$, thus
\begin{align*}
\tr(C_x \Gamma C_y \Gamma^T)=\ &\tr((\ett  m_x^T-2X^T X +m_x \ett^T)\Gamma(\ett  m_y^T-2Y^T Y +m_y \ett^T)\Gamma^T)\\
=\ & m_x^T\Gamma(\ett  m_y^T-2Y^T Y +m_y \ett^T)\Gamma^T \ett\\
&-2\tr(X^T X\Gamma(\ett  m_y^T-2Y^T Y +m_y \ett^T)\Gamma^T)\\
&+\ett^T\Gamma(\ett  m_y^T-2Y^T Y +m_y \ett^T)\Gamma^Tm_x\\
=\ & m_x^T \ett  m_y^T\ett-2m_x^T\Gamma Y^T Y\ett +n m_x^T\Gamma m_y\\
&-2m_y^T\Gamma^TX^T X\ett +4\tr(X^T X\Gamma Y^T Y\Gamma^T)-2\ett^TX^T X\Gamma m_y\\
&+n  m_y^T\Gamma^Tm_x-2\ett^TY^T Y\Gamma^Tm_x +\ett^Tm_y \ett^Tm_x\\
=\ & 4\tr(X^T X\Gamma Y^T Y\Gamma^T) -4m_x^T\Gamma Y^T Y\ett +2n m_x^T\Gamma m_y\\& -4\ett^TX^T X\Gamma m_y+2\ett^Tm_y \ett^Tm_x\\
=\ & \langle  2X\Gamma Y^T, 2X\Gamma Y^T\rangle \\
&+\langle 2n m_xm_y^T -4m_x\ett ^T Y^T Y-4X^T X\ett m_y^T, \Gamma\rangle\\
& +2\ett^Tm_y \ett^Tm_x.
\end{align*}

\end{proof}

Hence, the low rank QAP formulation of the Gromov-Wasserstein problem can be stated as  
\begin{align}
\min_{\Gamma\in P} \quad &-\langle  2X\Gamma Y^T, 2X\Gamma Y^T\rangle -\langle L, \Gamma\rangle+c_0\label{eq:GWlowrank}
\end{align}
where $L=2n m_xm_y^T -4m_x\ett ^T Y^T Y-4X^T X\ett m_y^T$, and $c_0=(\langle C_x, C_x\rangle+\langle C_y, C_y\rangle -4\ett^Tm_y \ett^Tm_x)/2.$
The next step is to relax the feasible set to doubly stochastic matrices, denoted $\overline{P}$,
\begin{align}
\min_{\Gamma\in \overline{P}} \quad &-\langle  2X
\Gamma Y^T, 2X\Gamma Y^T\rangle -\langle L, \Gamma
\rangle+c_0,\label{eq:GWlowrank_relaxed}
\end{align}
and since the objective function is concave, any optimal solution of  \eqref{eq:GWlowrank} is also an optimal solution of the relaxed problem.

\begin{prop}
Any optimal solution of the Gromov-Wasserstein problem \eqref{eq:GWlowrank}, is also an optimal solution to the relaxed Gromov-Wasserstein problem \eqref{eq:GWlowrank_relaxed}. 
Conversely, problem \eqref{eq:GWlowrank_relaxed} always has an optimal solution in one extreme point,
\footnote{An extreme point of a convex set is a point in 
the set which does not lie in any open line segment joining two points of the set. }
and any optimal extreme point to \eqref{eq:GWlowrank_relaxed} is also an optimal solution to  \eqref{eq:GWlowrank}.
\end{prop}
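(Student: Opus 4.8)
The plan is to combine two classical facts. The first is the Birkhoff--von Neumann theorem, which states that the set $\overline{P}$ of doubly stochastic matrices is the convex hull of the permutation matrices and that the extreme points of $\overline{P}$ are precisely the elements of $P$. The second is the principle that a continuous concave function attains its minimum over a compact convex set at one of the extreme points. Since the objective of \eqref{eq:GWlowrank_relaxed} is concave (as already noted, it is minus the convex quadratic $\Gamma\mapsto\langle 2X\Gamma Y^T,2X\Gamma Y^T\rangle$ plus a linear term) and $\overline{P}$ is a compact convex polytope, both ingredients apply directly, and the whole proposition reduces to bookkeeping once they are in place.

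First I would establish that \eqref{eq:GWlowrank_relaxed} has an optimal solution at an extreme point. Let $f$ denote the common objective. Since $\overline{P}$ is compact and $f$ is continuous, a minimizer $\Gamma^\ast$ exists. Writing $\Gamma^\ast=\sum_k\lambda_k V_k$ as a convex combination of extreme points $V_k\in P$, concavity of $f$ gives $f(\Gamma^\ast)\ge\sum_k\lambda_k f(V_k)\ge\min_k f(V_k)$. Because each $V_k\in\overline{P}$, optimality of $\Gamma^\ast$ forces the reverse inequality $\min_k f(V_k)\ge f(\Gamma^\ast)$, so some extreme point $V_k$ attains the minimum.

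Next I would tie the two problems together through equality of optimal values. Since $P\subseteq\overline{P}$ we have $\min_{\overline{P}}f\le\min_P f$, while the optimal extreme point just produced is a permutation matrix feasible for \eqref{eq:GWlowrank}, giving $\min_{\overline{P}}f\ge\min_P f$; hence the two optimal values coincide. From this, all three assertions follow at once: any optimal $\Gamma$ of \eqref{eq:GWlowrank} lies in $\overline{P}$ and attains the common optimal value, so it is optimal for \eqref{eq:GWlowrank_relaxed}; and conversely any optimal extreme point of \eqref{eq:GWlowrank_relaxed} is, by Birkhoff--von Neumann, a permutation matrix attaining the common value, hence optimal for \eqref{eq:GWlowrank}.

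I do not expect a genuine obstacle here; the only step requiring mild care is confirming that the minimization-at-an-extreme-point principle is being invoked with its hypotheses satisfied, namely that $f$ is concave and that $\overline{P}$ is compact and convex with extreme points exactly the permutation matrices. Once concavity is checked and the Birkhoff--von Neumann characterization is cited, the argument is purely logical and the proposition follows.
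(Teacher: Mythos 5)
Your proposal is correct and follows essentially the same route as the paper: both rest on the Birkhoff--von Neumann characterization of $\overline{P}$ together with the fact that a concave objective attains its minimum over a compact convex set at an extreme point. The only difference is presentational—you prove the extreme-point principle explicitly via the convex-combination (Jensen) argument and organize the conclusions through equality of the two optimal values, whereas the paper simply cites the principle—so the underlying argument is the same.
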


\begin{proof}
Since \eqref{eq:GWlowrank_relaxed} is the minimization  of a concave objective function over a convex sets $\overline P$, it attains the optimal value in an extreme point 
of the feasible set. Since the permutation matrices are the extreme points to the doubly stochastic matrices, i.e., $P=\ext(\overline P), $ \eqref{eq:GWlowrank_relaxed} attains its minimum on $P$. Further, the set of points in $P$ for which \eqref{eq:GWlowrank_relaxed} attains its maximum are the optimal solutions of \eqref{eq:GWlowrank}. To show the converse statement, note that a minimum exists since $\overline P$ is compact and the objective function is continuous. Further, since the objective function is concave, an optimum must be at an extreme point. Finally, since the extreme points of $\overline P$ is the permutation matrices $P$, any optimal extreme point of \eqref{eq:GWlowrank_relaxed} is also feasible and optimal to \eqref{eq:GWlowrank}. 
\end{proof}

In the next section we will propose a methodology and an algorithm for solving this problem.

\section{A cutting plane algorithm utilizing the low rank structure}\label{sec:lowrankstructure}

By Proposition~2, we know that an optimal solution to the Gromov-Wasserstein problem \eqref{eq:GWlowrank} can be obtained by solving the relaxed problem \eqref{eq:GWlowrank_relaxed}. However, the relaxed problem \eqref{eq:GWlowrank_relaxed} is still a high-dimensional non-convex QP, which is NP-hard \cite{pardalos1991quadratic}. The high dimensionality can, in particular, be a limiting factor in solving the problem. For example, it is known that the performance of spatial branch-and-bound, one of the main approaches for globally optimizing nonconvex problems \cite{floudas2013deterministic}, can scale poorly with the number of variables. Thus, directly optimizing either \eqref{eq:GWmin} or \eqref{eq:GWlowrank_relaxed} by spatial branch-and-bound is not computationally tractable for larger instances. Our idea is to use the low-rank formulation of the Gromov-Wasserstein problem and perform the optimization in a projected subspace of dimension $\ell_x\ell_y+1$ by solving a sequence of relaxed problems.

First, we note that problem \eqref{eq:GWlowrank_relaxed} can be written as
\begin{subequations}\label{eq:rewritten}
\begin{align}
\min_{W\in \mR^{\ell_x\times \ell_y}, w\in \mR, \Gamma\in \overline P} \quad  & -\|W\|_F^2 -w+c_0\label{eq:rewritten_a}\\
\mbox{subject to } \quad \quad \quad&
 W=2  X\Gamma Y^T, \
w= \langle L, \Gamma\rangle.\label{eq:rewritten_b}
\end{align}
\end{subequations}
Equivalence of problems \eqref{eq:GWlowrank_relaxed} and \eqref{eq:rewritten} is shown by simply inserting the expressions for $w$ and $W$ into the objective function. Next, we project out the $\Gamma$ variables, and we define the feasible set in the $(w,W)$-space as 
\begin{align*}
\mathcal{F} = \text{Proj}_{W,w}\left(W\in \mR^{\ell_x\times \ell_y}, w\in \mR, \Gamma\in \overline{P} \ \big | \ W=2  X\Gamma Y^T,\ w= \langle L, \Gamma\rangle\right).\end{align*}
Constructing an H-representation of the polytope $\mathcal{F}$, i.e., representing it by linear constraints of the form $\langle Z_r,W \rangle+\alpha_r w\le \beta_r$, is not trivial and the number of constraints can grow exponentially with the number of data points. Therefore, we propose an algorithm based on a cutting plane scheme to optimize over $\mathcal{F}$.

Instead of directly optimizing the objective in \eqref{eq:rewritten_a} over the feasible set $\mathcal{F}$, which we don't have a tractable representation for, we relax the problem as 
\begin{subequations} \label{eq:relaxed}
\begin{align}
\min_{W\in \mR^{\ell_x\times \ell y},w\in \mR} \quad & - \|W\|_F^2 -w +c_0\label{eq:relaxed_a}\\
\mbox{subject to } \, \quad  &\langle Z_r,W \rangle+\alpha_r w\le \beta_r, \quad \mbox{ for } \ r=1,\ldots, N. \label{eq:relaxed_b} 
\end{align}
\end{subequations}
The linear constraints \eqref{eq:relaxed_b} are supporting hyperplanes of the feasible set $\mathcal{F}$, which we will generate iteratively. The goal is to force the minimizer of  problem~\eqref{eq:relaxed} into the feasible set $\mathcal{F}$ by using relatively few linear constraints. Keep in mind, we don't need a full representation of set $\mathcal{F}$, we only need to capture the shape of $\mathcal{F}$ in some areas of interest, e.g., the constraints defining the faces of $\mathcal{F}$ at the optimal solution of problem \eqref{eq:rewritten} would suffice. The main advantage of the relaxation in problem \eqref{eq:relaxed} is that it contains far fewer variables than both problems \eqref{eq:GWlowrank_relaxed} and \eqref{eq:rewritten}, and the dimensionality is independent of the number of data points. Problem \eqref{eq:relaxed} can, therefore, be solved much more efficiently, especially in early iterations when the number of constraints is low. We will show that the constraints can be determined, as needed,  by solving optimal transport problems.   
Based on this, we will develop an iterative approach that sequentially solves  problem \eqref{eq:relaxed} and adds a constraint until the the solutions is the same as \eqref{eq:rewritten}.

To initialize the search we determine a bounding box of $\mathcal{F}$ and use this to define a set of constraints \eqref{eq:relaxed_b}. The bounding box is determined by the (elementwise) minimum and maximum of the variables $w$ and $W$ given by
\begin{subequations}
\label{eq:boundingbox}    
\begin{align}
\min_{\Gamma \in \bar{P}}\;2 (X \Gamma Y^T)_{i,j} &\le W_{i,j} \le \max_{\Gamma \in \bar{P}} \;2(X \Gamma Y^T)_{i,j} \mbox{, } i=1,\ldots, \ell_x; \; j=1,\ldots, \ell_y\\
\min_{\Gamma \in \bar{P}}\langle L,\Gamma\rangle &\le\; w \;\le \max_{\Gamma \in \bar{P}}\langle L,\Gamma\rangle,
\end{align}
\end{subequations}
which can each be computed efficiently by solving a standard optimal transport problem. Initializing the set of constraints by the bounding box ensures that \eqref{eq:relaxed} is well-defined and bounded. 

If the minimizer of problem \eqref{eq:relaxed} is within $\mathcal{F}$, then we can stop as the solution is optimal for \eqref{eq:rewritten}.
\footnote{Remember, we are minimizing the objective \eqref{eq:rewritten_a} over an outer approximation of the feasible set.} 
Otherwise, we improve the outer approximation of $\mathcal{F}$ by adding new a constraint defined by $Z_{N+1}$, $\alpha_{N+1}$ and $\beta_{N+1}$. Let $(w_N, W_N)$ be the current optimal solution of \eqref{eq:relaxed}, and assume that $(w_N, W_N) \notin \mathcal{F}$, then we form a new constraint, a so-called cutting plane, that excludes $(w_N, W_N)$ from the feasible set of \eqref{eq:relaxed}.

We form a new constraint based on the gradient of the objective function  \eqref{eq:relaxed_a}, which is given by
 \begin{align*}
 \nabla_{(w,\ve(W)^T)}(- \|W\|_F^2 -w, )= \begin{pmatrix} -1,-2\ve(W)^T \end{pmatrix}.
 \end{align*} 
 By letting $\alpha_{N+1}=1$ and $Z_{N+1}=2W_N$, the hyperplane defining the new constraint will have the (negative) gradient in the optimum $(w_N, W_N)$ as normal vector. Then we select $\beta_{N+1}$ such that the new constraint forms a supporting hyperplane of $\mathcal{F}$ \eqref{eq:relaxed_b}. This can be found by solving the following optimal transport problem 
 \begin{align} 
     \beta_{N+1}&:= \max_{W\in \mR^{\ell_x\times \ell_y}, w\in \mR, \Gamma\in \overline P} \quad \langle Z_{N+1}, W\rangle +\alpha_{N+1}w\qquad\nonumber \\
     & \qquad\mbox{subject to } \quad\qquad  W=2  X\Gamma Y^T\nonumber, \ w= \langle L, \Gamma\rangle\nonumber\\ 
 &=  \max_{\Gamma\in \overline P} \  \langle 4X^T W_N Y+L, \Gamma\rangle.\label{eq:get_beta}
 \end{align}

 When solving this problem, we also obtain a solution $\Gamma_{N}$ which is a doubly stochastic matrix (generically also a permutation matrix), which gives an upper bound for \eqref{eq:rewritten} and a candidate for the optimal solution. In the following subsection, we prove that the that algorithm converges to a globally optimal solution.

The algorithm is described in Algorithm \ref{alg:cap_new}.

\newcommand{\LB}{L_{\rm bound}}
\newcommand{\UB}{U_{\rm bound}}

\begin{algorithm}
\caption{Gromov-Wasserstein problem}\label{alg:cap_new}
\begin{algorithmic}
\State \textbf{Input}  $X\in \mR^{\ell_x\times n}, Y\in \mR^{\ell_y\times n}$, $\epsilon>0$ \hfill (Define point clouds and give tolerance level) 
\State $\LB \gets -\infty$, and $\UB \gets \infty$ \hfill(Set lower and upper bounds)
\State 
$(Z_r,\alpha_r, \beta_r)$ for $r=1,\ldots, N$ from \eqref{eq:boundingbox}, where $N=2\ell_x\ell_y+2$ \hfill (Set initial constraints)
\While{$\UB - \LB > \epsilon$}\\
$(w_N, W_N)$ $\gets$ Optimal solution to \eqref{eq:relaxed} \hfill (Solve \eqref{eq:relaxed})\\
$\LB\gets - \|W_N\|_F^2 -w_N+c_0$ \hfill (Update lower bound)\\
$\Gamma_N$ $\gets$ Optimal solution to \eqref{eq:get_beta} \hfill (Solve \eqref{eq:get_beta}) \\
$\UB\gets \min(\UB, - \|2X\Gamma_N Y^T\|_F^2 -\langle L,\Gamma_N \rangle+c_0)$ \hfill (Update upper bound)\\
$(Z_{N+1},\alpha_{N+1},\beta_{N+1}) \gets (2 W_N, 1, \langle 4X^T W_N Y+L, \Gamma_N\rangle)$ \hfill (Calculate new constraints)\\
$N\gets N+1$ \hfill (Update iteration number)
\EndWhile
\end{algorithmic}
\end{algorithm}

 A geometric illustration of the algorithm is given in Figure~\ref{fig:illustrative example}. For illustrative purposes, we have used one-dimensional data resulting in a two-dimensional problem in the $(W,w)$-space.  The data sets consist of $6$ points each where one of the data sets has a reflective symmetry. This results in two global optima and $6!$ projected permutations in $P$. The first solution of \eqref{eq:relaxed} is located at one of the corners of the bounding box and marked with a "1" (the subsequent solutions are marked "2" -- "5"). The infeasible point "1" is excluded from the search space by a cutting plane (red line, marked with an "A"). Following the same procedure we obtain point "2", and cutting plane "B". Adding further cutting planes excludes "3" and subsequently "4", resulting in the feasible and optimal point "5". Note that the cutting planes from iterations 3 and 4 almost overlap the cutting planes "A" and "B", since the gradients in the points 1 and 3 are very similar (the same for points 2 and 4).

\begin{figure}[ht]
\vskip 0.2in
\begin{center}
\includegraphics[width=0.6\columnwidth]{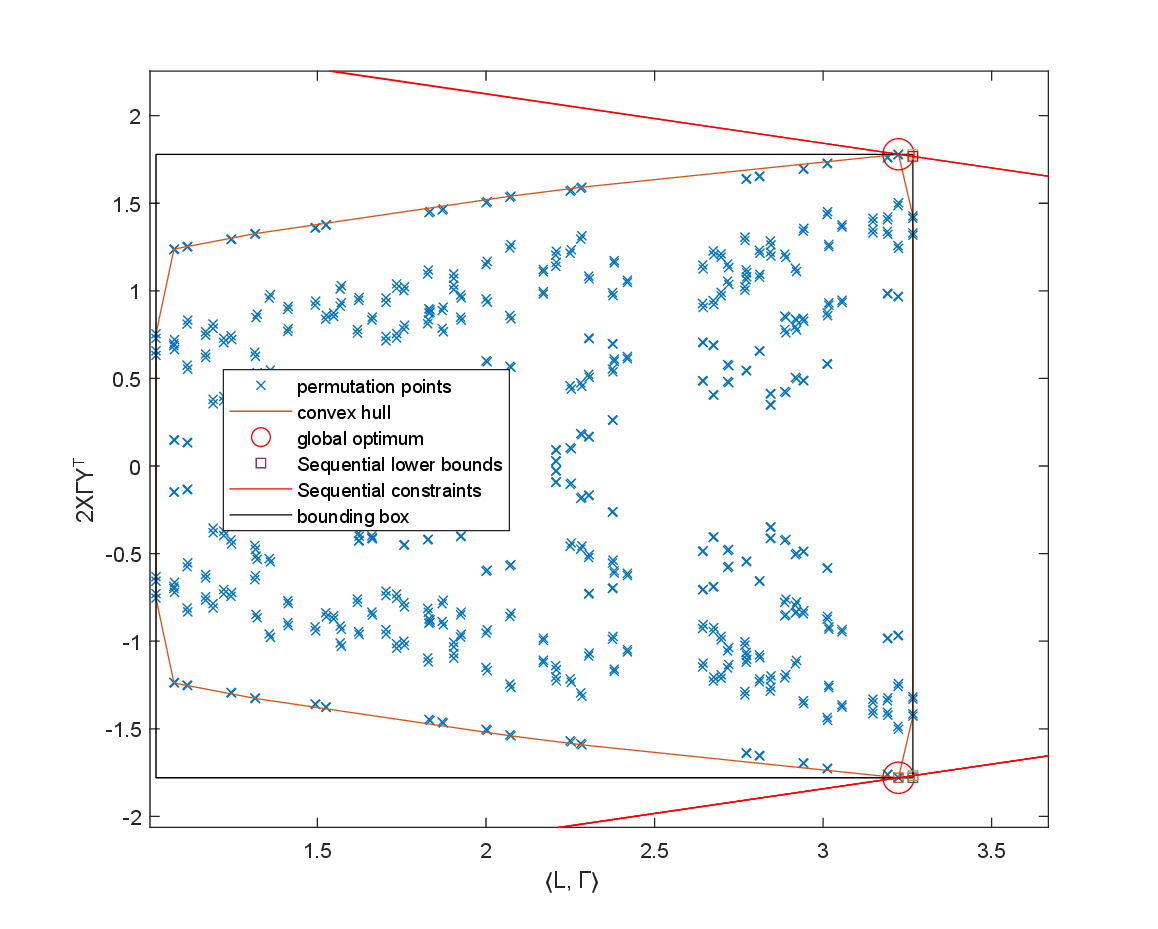}
\includegraphics[width=0.3\columnwidth]{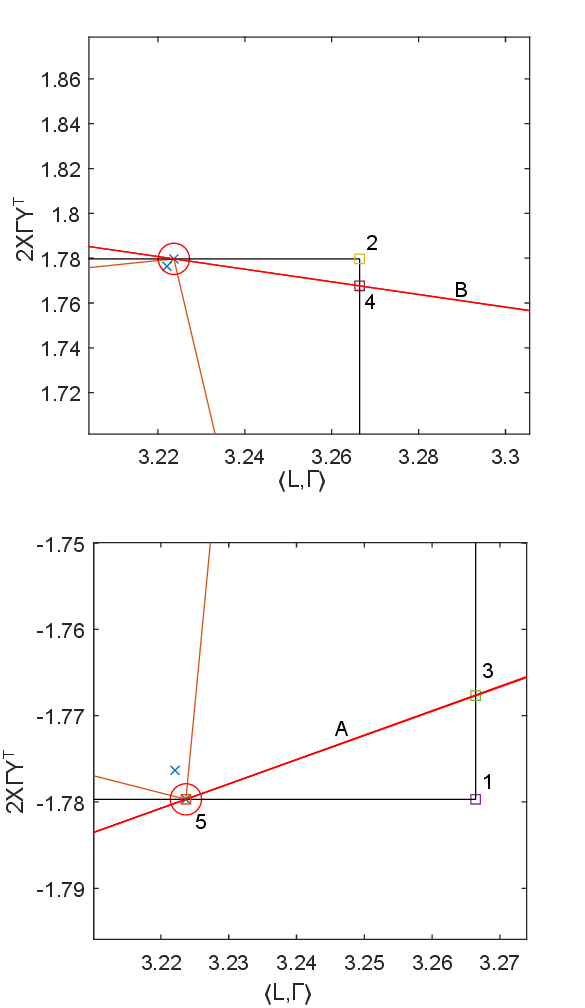}
\caption{Left image: An illustrative example of the method on one-dimensional data. Right image: The area around the two global optima highlighting the sequence of optimal extreme points in the approximate cover and generation of cutting planes.}
 
\label{fig:illustrative example}
\end{center}
\end{figure}

\newpage
\subsection{Proof of convergence of Algorithm~\ref{alg:cap_new}}

The main result considering convergence is presented in the following theorem.
\begin{thm} \label{thm:convergence} 
The gap between the upper bound and lower bound  in Algorithm~\ref{alg:cap_new} converges to $0$ (if the tolerance is $\epsilon=0$).
\end{thm}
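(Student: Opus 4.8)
The plan is to treat Algorithm~\ref{alg:cap_new} as an outer-approximation (cutting-plane) scheme for minimizing the concave objective $f(W,w)=-\|W\|_F^2-w+c_0$ over the polytope $\mathcal{F}$, and to extract from the \emph{quadratic} structure of $f$ a quantitative gap bound. Let $\mathcal{F}_N$ denote the polytope cut out by the accumulated constraints \eqref{eq:relaxed_b} at iteration $N$. Two monotonicity facts drive the argument. First, since $\beta_{N+1}$ in \eqref{eq:get_beta} is the maximum of $\langle 2W_N,W\rangle+w$ over $\mathcal{F}$, each new constraint is a supporting hyperplane of $\mathcal{F}$, so $\mathcal{F}\subseteq\mathcal{F}_{N+1}\subseteq\mathcal{F}_N$; hence the lower bound $\LB=\min_{\mathcal{F}_N}f$ is valid and nondecreasing. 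Second, $\UB$ is the running minimum of $f$ over the feasible points $(\tilde W_N,\tilde w_N):=(2X\Gamma_N Y^T,\langle L,\Gamma_N\rangle)\in\mathcal{F}$, so it is a valid upper bound and nonincreasing. Consequently $\UB-\LB$ is monotone nonincreasing and converges, and it suffices to produce a subsequence along which it tends to $0$.

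Next I would record two preparatory observations. Because the bounding-box constraints \eqref{eq:boundingbox} are never discarded, every iterate $(W_N,w_N)$ and every feasible point $(\tilde W_N,\tilde w_N)$ stays in one fixed compact box, so by Bolzano--Weierstrass the sequence $\{W_N\}$ has a convergent subsequence. Moreover, for $M>N$ the point $(W_M,w_M)$ is feasible for $\mathcal{F}_M$ and therefore obeys the cut generated at iteration $N$, i.e. $\langle 2W_N,W_M\rangle+w_M\le\beta_{N+1}=\langle 2W_N,\tilde W_N\rangle+\tilde w_N$.

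The crux is the following quadratic gap estimate, obtained by completing the square. Using the cut inequality to bound $w_M-\tilde w_N$ and regrouping the two quadratic terms of $f$,
\begin{align*}
f(\tilde W_N,\tilde w_N)-f(W_M,w_M)
&=\|W_M\|_F^2+w_M-\|\tilde W_N\|_F^2-\tilde w_N\\
&\le\|W_M\|_F^2-\|\tilde W_N\|_F^2+\langle 2W_N,\tilde W_N-W_M\rangle\\
&=\|W_M-W_N\|_F^2-\|\tilde W_N-W_N\|_F^2\\
&\le\|W_M-W_N\|_F^2 .
\end{align*}
Since the upper bound after iteration $M$ is at most $f(\tilde W_N,\tilde w_N)$ for every $N\le M$, while $\LB=f(W_M,w_M)$ at iteration $M$, the left-hand side dominates the current gap; hence $0\le\UB-\LB\le\|W_M-W_N\|_F^2$ for all $N<M$.

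To finish, I would evaluate this bound along the convergent subsequence $W_{N_k}\to W^\star$: choosing $N=N_k$ and $M=N_{k'}$ with $k<k'$ makes the right-hand side $\|W_{N_{k'}}-W_{N_k}\|_F^2$ arbitrarily small (a Cauchy sequence), so the gap tends to $0$ along the subsequence, and by the monotonicity established at the outset the full sequence $\UB-\LB$ shares this limit and converges to $0$. I expect the main obstacle to be exactly the quadratic estimate above: a cutting plane is only a first-order (tangent) object, so the whole argument hinges on the completing-the-square identity that converts the linear cut inequality, together with the $\|W\|_F^2$ term of $f$, into the squared distance $\|W_M-W_N\|_F^2$; once that identity is in hand, the compactness and monotonicity steps are routine.
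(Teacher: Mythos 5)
Your proof is correct, and it rests on the same two pillars as the paper's argument: the supporting-hyperplane inequality generated by \eqref{eq:get_beta}, and compactness of the iterates enforced by the bounding box \eqref{eq:boundingbox}. But the key estimate and the architecture genuinely differ. The paper argues by contradiction: from the cut inequality it derives, via Cauchy--Schwarz, the \emph{linear} estimate $\epsilon_N \le \bigl(|w_N-w|^2+\|W_N-W\|_F^2\bigr)^{1/2}\bigl(1+4\|W_N\|_F^2\bigr)$ for every $(w,W)$ obeying the new cut, so a gap bounded away from zero would keep all iterates uniformly separated in the full $(w,W)$-space, contradicting Bolzano--Weierstrass. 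Your completing-the-square identity instead converts the same cut inequality into the constant-free, purely $W$-space bound $U_{\mathrm{bound}}-L_{\mathrm{bound}}\le\|W_M-W_N\|_F^2$ for $N<M$ (the $w$-terms cancel exactly through the cut), after which a convergent, hence Cauchy, subsequence of $\{W_N\}$ closes the argument directly. What your route buys: no Cauchy--Schwarz, no constant of the form $1+4\max\|W_N\|_F^2$ to control (boundedness is needed only to extract the subsequence, not inside the estimate), a clean quantitative relation between the gap and the separation of iterates, and an explicit record of the monotonicity of the two bounds via $\mathcal{F}\subseteq\mathcal{F}_{N+1}\subseteq\mathcal{F}_N$ --- a fact the paper also leans on implicitly when it passes from ``the gap does not converge to $0$'' to ``$\epsilon_N\ge\epsilon$ for all $N$''. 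What the paper's route buys: by bounding the gap at the \emph{earlier} iteration $N$ in terms of the distance to any later point satisfying that cut, the contradiction with accumulation of iterates is immediate, without having to track the running minimum in the upper bound. One small point to make explicit in your write-up: the gap converges at the outset because it is monotone \emph{and} nonnegative, the latter since $\mathcal{F}\subseteq\mathcal{F}_N$ gives $L_{\mathrm{bound}}\le\min_{\mathcal{F}}f\le U_{\mathrm{bound}}$; this is exactly the validity of the bounds you establish, so it costs nothing, but it should be said.
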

\begin{proof}
Consider the $N$th iteration in Algorithm \eqref{alg:cap_new}, let $(w_N, W_N)$ be an optimal solution to \eqref{eq:relaxed}, and $ \Gamma_N$ is an optimal solution to \eqref{eq:get_beta} with corresponding points $(\hat w, \hat W)=(\langle L,\Gamma_N\rangle, 2X\hat \Gamma_N Y^T)$, in the $(w,W)$-space. Assume that the gap in the objective function between those two points is
\begin{align}\label{eq:gap}
    \epsilon_N = \|W_N\|_F^2+w_N-\|\hat W\|_F^2- \hat w.
\end{align}
The new constraint is then defined by$
\langle Z_{N+1}, W\rangle+w\le \beta_{N+1}
$
where $Z_{N+1}=2W_N$ and $\beta_{N+1}=2\langle W_N, \hat W\rangle +\hat w$, and thus for any point $(w,W)$ that satisfy the constraint it must hold that
\[
0\le 2\langle W_N, \hat W-W\rangle +\hat w-w.
\]
By substituting $\hat w$ from \eqref{eq:gap}, we obtain
\begin{align*}
    \epsilon_N &\le 2\langle W_N, \hat W-W\rangle -w+\|W_N\|_F^2+w_N-\|\hat W\|_F^2\\
    &=w_N-w +2\langle W_N, W_N-W\rangle- \|\hat W-W_N\|_F^2\\
    &\le w_N-w +2\langle W_N, W_N-W\rangle\\
    &\le (|w_N-w|^2+ \|W_N-W\|_F^2)^{1/2}(1+4\|W_N\|_F^2)
\end{align*}
where we in the last step have used the Cauchy-Schwarz inequality.  

For any iteration number $N+k$ with $k>0$, we have that $(w_{N+k}, W_{N+k})$ is feasible for $\langle Z_{N+1}, W\rangle+w\le \beta_{N+1}$, and thus the Euclidean distance between $(w_{N}, W_{N})$ and $(w_{N+k}, W_{N+k})$ is at least $\epsilon_N/(1+4\|W_N\|_F^2)$.
If the gap in the algorithm does not converge to $0$, then there is an $\epsilon>0$ for which $\epsilon_N\ge \epsilon$ for all $N$ and thus the distance between any two points in the sequence $\{(w_{N}, W_{N})\}_N$ is bounded from below by 
$\epsilon/(1+4\max\{\|W\|_F^2\mid \eqref{eq:boundingbox}\})$. 
However, since the infinite sequence points $\{(w_{N}, W_{N})\}_N$ belong to a bounded set defined by \eqref{eq:boundingbox}, there must be a convergent subsequence, which contradicts that 
there is a positive lower bound on the distance between any two points. 
\end{proof}

From the Theorem it is clear that the gap between the upper and lower bound converges to zero, and thus proving that the best-found solution is optimal.

\subsection{Considerations when solving the relaxed problem}\label{sec:extremepoint}

Problem \eqref{eq:relaxed} minimizes a concave function over a convex set. Thus, the solution is located in the extreme points of the convex set, i.e., the outer approximation of $\mathcal{F}$. The standard approach to solve such problems is by branch and bound methods. However, the low dimension and sequential generation of constraints make it viable to search among the extreme points for an optimal solution. 

To simplify notation,  we define $x^T = \begin{pmatrix}
w & \ve(W)^T
\end{pmatrix} \in \mathbb{R}^r$ where $r:=\ell_x\ell_y+1$. Then we can write (\refeq{eq:relaxed_b}) on the form $A_k x \le b_k$. Note that, by construction, none of the constraints are strongly redundant as every constraint is satisfied with equality for a permutation. As the constraints are added sequentially, it is actually easy to compute the new extreme points by keeping track of previous extreme points as described in the following proposition.  
\begin{prop}
\label{prop:extremepointgeneration}
Assume that the extreme points $\{x_k\}_k$ of the convex set described by $Ax\le b$ are known. When adding a constraint $A_N^T x\le b_N$, the additional extreme points are linear combinations of pairs of existing extreme points $x_{k_1}$ and $x_{k_2}$ both satisfying the same $r-1$ constraints with equality and $A_N^Tx_{k_1} \le b_n$ and $A_N^Tx_{k_2} > b_n$ so that the combination satisfies $A_N^T (\lambda x_{k_1} + (1-\lambda) x_{k_2}) = b_N$.
\end{prop}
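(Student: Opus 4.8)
The plan is to use the standard correspondence between faces of a polytope and active constraints. Write $P_0=\{x\in\mathbb{R}^r : Ax\le b\}$ for the polytope whose extreme points $\{x_k\}_k$ are assumed known, and $P_1=P_0\cap\{x : A_N^T x\le b_N\}$ for the polytope after the new cut is appended. Crucially, $P_0$ is bounded, since the algorithm is initialized with the bounding box \eqref{eq:boundingbox}, so every face of $P_0$ is a bounded polytope. I would recall the two facts I will lean on: $x$ is an extreme point of a polytope in $\mathbb{R}^r$ exactly when the constraints active at $x$ (those met with equality) span $\mathbb{R}^r$, and a one-dimensional face (an edge) is cut out by $r-1$ linearly independent active constraints. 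An \emph{additional} extreme point means a vertex of $P_1$ that is not already a vertex of $P_0$; these are precisely what must be characterized.

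First I would take such a new vertex $v$ of $P_1$ and show the new constraint is tight there, i.e. $A_N^T v=b_N$. If instead $A_N^T v<b_N$, then only rows of the old system $Ax\le b$ are active at $v$; since $v$ is a vertex of $P_1$ these active rows span $\mathbb{R}^r$, which would make $v$ a vertex of $P_0$ as well, contradicting that $v$ is new. Hence $v$ lies on the hyperplane $\{A_N^T x=b_N\}$, and because $v$ is a vertex of $P_1$ I can select, among its active constraints, the vector $A_N$ together with $r-1$ rows $a_1,\dots,a_{r-1}$ of $A$ that are jointly linearly independent (extending $\{A_N\}$ to a basis using the spanning active set).

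Next I would identify the edge of $P_0$ carrying $v$. The $r-1$ independent equalities $a_i^T x=c_i$ define an affine line $\ell$ through $v$, and since each $a_i^T x\le c_i$ is a valid supporting inequality of $P_0$ that is tight at $v$, the set $\ell\cap P_0$ is a face of $P_0$ of dimension at most one. It cannot be zero-dimensional, because a zero-dimensional face is a vertex of $P_0$, which $v$ is not; hence $\ell\cap P_0$ is an edge, and by boundedness of $P_0$ it is a segment $[x_{k_1},x_{k_2}]$ between two of the known vertices, both satisfying the same $r-1$ constraints with equality. Since $v\in\ell\cap P_0$ we have $v=\lambda x_{k_1}+(1-\lambda)x_{k_2}$, and $v$ being new forces $0<\lambda<1$. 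Finally, the affine map $x\mapsto A_N^T x$ restricted to $\ell$ is non-constant (otherwise the whole edge would lie on the cut and $v$ could not be a vertex of $P_1$); as it equals $b_N$ at the interior point $v$, the endpoints lie on strictly opposite sides, giving $A_N^T x_{k_1}\le b_N$ and $A_N^T x_{k_2}>b_N$ after relabeling, with $\lambda$ pinned down by $A_N^T(\lambda x_{k_1}+(1-\lambda)x_{k_2})=b_N$.

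The step I expect to be the main obstacle is the third one: arguing rigorously that the $r-1$ active old constraints cut out a \emph{genuine} edge of $P_0$, that is, a one-dimensional bounded face whose endpoints are two of the existing vertices, rather than a lower-dimensional or unbounded set. This is exactly where the supporting-hyperplane/face identity and the boundedness furnished by the bounding box are indispensable, and it is also where degenerate vertices (with more than $r$ active constraints) must be dealt with by carefully choosing the $r-1$ linearly independent rows.
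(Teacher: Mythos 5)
Your proof is correct, but it takes a genuinely different route from the paper's. The paper argues \emph{forward} from the known vertices: it parametrizes points of the old polytope as convex combinations of its extreme points, observes that a strict convex combination of two vertices satisfies with equality exactly those constraints that both endpoints satisfy with equality, and from this concludes both (i) that a point on an edge (a pair sharing $r-1$ tight constraints) which also meets the new hyperplane has $r$ tight constraints and hence is an extreme point, and (ii) that combinations of three or more vertices, or of pairs sharing fewer than $r-1$ tight constraints, cannot be extreme points. You instead argue \emph{backward} from an arbitrary new vertex $v$ of the cut polytope: the new constraint must be active at $v$ (else $v$ would already be a vertex of $P_0$), an active-set argument plus the face lattice pins $v$ to a face of $P_0$ of dimension one, and boundedness (supplied by the bounding-box initialization) makes that face an edge $[x_{k_1},x_{k_2}]$ whose endpoints straddle the hyperplane. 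What your route buys is rigor precisely where the paper is informal: degenerate vertices are handled by extracting $r-1$ linearly independent active rows, and the existence of the carrying edge is derived rather than asserted; you even get the strict inequality $A_N^T x_{k_1} < b_N$ for free from non-constancy of $A_N^T x$ on the edge. What the paper's route buys is the converse inclusion --- that every adjacent pair straddling the hyperplane actually produces a new extreme point --- which is not literally part of the statement but is what the implementation of the extreme-point method relies on to \emph{generate} (rather than merely locate) the new vertices; your argument establishes only that no new vertex is missed, so to fully justify the algorithm one would append the short converse check (the $r-1$ shared constraints together with $A_N$ are linearly independent exactly because $A_N^T x$ is non-constant on the edge, so the intersection point is a basic feasible solution of the new system).
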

\begin{proof}
Let $W$ be a matrix whose columns consist of the extreme points defined by the $N-1$ constraints $Ax\le b$. Also, let $a_N^Tx \le b_N$ be an additional constraint, $e_k$ be a unit vector with $1$ on position $k$, and let $\alpha$ parametrize the convex cone on $W$, i.e. $1^T\alpha = 1$, $\alpha \ge 0$ so that $A W\alpha \le b$ describes all points in the convex set. Suppose that $B_k$ describes the indices of the constraints that define the k:th extreme point by letting $A_{B_k}$ be the sub matrix of $A$ including the rows denoted by the indices in $B_k$. Then $A_{B_k}We_k= b_{B_k}$. 
It then follows that $AW(\lambda e_{k_1}+(1-\lambda) e_{k_2})_j =b_j$ if and only if $j\in B_{k_1}$ and $j\in B_{k_2}$ and for a $\lambda$, $0 \le \lambda \le 1$.
Thus, the point $\lambda x_{k_1} + (1-\lambda) x_{k_2}$ so that $A_N^T (\lambda x_{k_1} + (1-\lambda) x_{k_2}) = b_N$ then satisfies $r$ constraints with equality and all other constraints with inequality, i.e. the point is an extreme point to the set. We also note that every multiple combination (three or more) of extreme points sharing $r-1$ constraints are not extreme points as they are linear combination of the pairs given by $\alpha(\lambda_1 x_1 + (1-\lambda_1) x_2) + (1-\alpha)(\lambda_2 x_2 + (1-\lambda_2) x_3)$. Every linear combination of pairs of points sharing less than $r-1$ constraints will not satisfy $r$ constraints, i.e. they are not extreme points. 
\end{proof}

Especially, in lower dimensions, e.g., with two or three dimensional data, this approach of keeping track of all extreme points and calculating new extreme points after adding a constraint can be very efficient for solving problem \eqref{eq:relaxed}. More details of this method is provided in section \ref{sec:implementation}. In the numerical results, we present results where problem \eqref{eq:relaxed} is solved both by this extreme point search and the spatial branch and bound method in Gurobi.

\subsection{A description of the implementation of the extreme point method}\label{sec:implementation}

The handling of the extreme points in the paper is done by keeping track of the extreme points, their connection to the boundary constraints, and lookup tables for the adjacent extreme points, i.e., extreme points that satisfies the same $r-1$ constraints with equality, where $r$ is the rank of the problem. 
Let the extreme points be described in the matrix $E$ where each column describes an extreme point. Thus $AE \le b \ett$, if the constraints are described by the matrix $A$ and vector $b$ such that $Ax\le b$ is the constraint equations.

Let the adjacency be described in a (sparse) matrix with binary elements $D$ where $D_{i,j} = 1$ if extreme point $i$ and $j$ are adjacent.
Let us also keep track of the constraints that are satisfied by an extreme point with equality. For this purpose let $B$ be a matrix with binary elements in which the element $B_{i,j} = 1$ if extreme point $j$ satisfies constraint $i$ with equality. Thus, $D_{i,j}  = 1$ if $(B^T B)_{i,j} = r-1$. This is one of the computational drivers for the proposed extreme point method.

When a new constraint $(A_n,b_n)$ is added, the new extreme points  are generated by a linear combination of the infeasible extreme points $\mathcal{I}:= \{i : A_n E_i > b_n\}$ and their adjacent feasible extreme points $f_i:= \{j: D(i,j) = 1\}$ where $i \in \mathcal{I}$. Let $\#$ indicate the number of elements of a finite set, then $\sum_{i \in \mathcal{I}} \#(f_i)$ is the number of new extreme points. We place the new extreme points in the matrix $E$ by adding them in the end as a matrix $P$. The new matrix containing the extreme points 
\begin{align*}
    E_n = \begin{pmatrix}
    E &P
\end{pmatrix}
\end{align*}
The matrix keeping track of which extreme points satisfies which constraints with equality is extended with
\begin{align*}
    B_n = \begin{pmatrix}
        B &C\\
        0 &\ett
    \end{pmatrix}
\end{align*}
where $C_{\cdot,k} = B_{\cdot,i} \odot B_{\cdot,(f_i)_k}$, preferably implemented using bitwise operators. Here, the last row describes the newly added constraint and $k$ a re-enumeration of the new extreme points.

The new adjacency matrix $D_n$ can be concatenated with the old $D$ and two additional matrices
\begin{align*}
D_n =\begin{pmatrix}
D &O \\O^T &N    
\end{pmatrix}
\end{align*}
where $O_{i,j} = 1$ if the old extreme point $i$ is adjacent to the new extreme point $j$. This information is already available for us, since the new extreme points are adjacent to $f_j$. Finally, $N_{i,j} = 1 $ if $(C^TC)_{i,j} = r-2$. This is by far the most computationally expensive operation in the proposed algorithm, which can be implemented with std::popcount in the standard c++ library. For 3-dimensional problems, around 120 bits needs to be compared between all new extreme points.
\section{Numerical results} \label{sec:simulations}
\subsection{Computational efficiency}
In this section we compare the time to solve the problem up to an accuracy measured in relative error with different methods: Algorithm 1 when \eqref{eq:relaxed} is solved with the extreme point method as described in section \ref{sec:extremepoint}, Algorithm 1 when \eqref{eq:relaxed} solved using Branch \& bound using Gurobi, MILP1 formulation in \cite{friesen2019low} implemented in Gurobi and finally when  \eqref{eq:rewritten} is directly solved using Gurobi.
The MILP1 formulation can handle a larger class of problems, but is reported to handle very few dimensions. All computations were performed using Matlab on an Intel i5 2.9 GHz PC.
The linear optimal mass problem \eqref{eq:get_beta} was solved using the package \cite{bonneel2011displacement} which is based on the network simplex \cite{orlin1997polynomial}. The model problems tested are evenly distributed points in a unit disc or ball which we denote $\mathcal{U}$, and normally distributed points $\mathcal{N}(0,\sigma)$. We denote $\mathcal{N}_1 := \mathcal{N}(0,I)$, $\mathcal{N}_2 := \mathcal{N}(0,\diag(1,1,\frac{1}{10}))$ and $\mathcal{N}_3 := \mathcal{N}(0,\diag(1,\frac{1}{2},\frac{1}{10}))$. See table \ref{table:compeff} for numerical results.
Some notes on the results
\begin{enumerate}
 \setlength{\itemsep}{2pt}
\setlength{\parskip}{2pt}
    \item On 2-dimensional data ($\ell_x= \ell_y = 2$), the extreme point method is particularly efficient.
\item For problems that need many extreme points ($>10^6$), which depends on the data itself, the handling of extreme points becomes the driver of computational cost.
\item Problems mainly containing reflections (e.g. $\mathcal{N}_3$) are easier to solve than those with room for rotations.
\item Directly solving \eqref{eq:rewritten} with Gurobi was not feasible for problems with $n\ge 500$. 

\end{enumerate}

\setlength{\tabcolsep}{4pt} 

\begin{table}
  \caption{Computational efficiency. Computational time on the format [mean (low - high)] from 5 repeats for various problem geometries, dimensions and sizes, for the proposed extreme point method, the same method using branch and bound (B{\&}B), the problem formulation MILP1 \cite{friesen2019low} and finally \eqref{eq:rewritten} implemented in Gurobi via Matlab interface. An "-" indicates that the problem timed out, in such a way being incomparable with the proposed method. An "!" indicates that the problem reached $10^4$ iterations and stopped to the accuracy indicated. 
  }
  \label{table:compeff}
 \begin{adjustbox}{center}
  \begin{tabular}{lllllllll}
    \toprule
  
    Type & $n$ & $\ell_x,\ell_y$    &Rel.& Algorithm 1 [s]    & MILP1 [s]&\eqref{eq:rewritten} B{\&}B [s]\\
    &  &     & error&  Extreme point  / B{\&}B & \\
    
    \midrule
  
    $\mathcal{U}$ & 10 & 2,2  &$10^{-8}$& 0.14 (0.07-0.3)   / 21 (6-47) & 39 (11-58)&0.15 (0.14-0.16)\\
    $\mathcal{U}$ &100 & 2,2 &$10^{-8}$& 0.48 (0.3-0.7)  / 86 (52-107) & - &25 (19-39)\\
$\mathcal{U}$ &500 & 2,2 &$10^{-8}$& 11 (9-16) / 408 (269-511)  & - &-\\    
    $\mathcal{U}$ &1000 & 2,2 &$10^{-8}$& 69 (54-85)  / 576 (389-1059)  & - &-\\

      $\mathcal{U}$ &2000 & 2,2 &$10^{-8}$& 460 (313-653)  / -  & - &-\\

     \midrule
    
     $\mathcal{U}$ &10   & 2,3 &$10^{-8}$& 1.8 (1.2-2.4)   / 133 (45-296)  & 105 (49-147)&2.4(1.8-3.4)\\    
$\mathcal{U}$ &100   & 2,3 &$10^{-8}$& 278 (99-813)   / -  & -&172 (133-221)\\   
$\mathcal{U}$ &500   & 2,3 &$10^{-8}$& 9568  / -  & -&-\\

     \midrule
 $\mathcal{N}_1$ & 10& 2,3&$10^{-8}$ &0.51 (0.39-0.65) / 708 (233-1184) & 146 (66-227)&3 (2.6-4.0)\\       
     $\mathcal{N}_1$ & 100& 2,3&$10^{-8}$ &86 (20-275) / - & -&95 (73-116)\\
      $\mathcal{N}_1$ & 500& 2,3&$10^{-5}$ &5310!/ - & -&-\\
     \midrule
    $\mathcal{N}_2$ & 10&3,3&$10^{-2}$&1.8 (0.7-3.2) / 142 (73-210) & 117 (71-163)&0.2(0.1-0.3)\\
    $\mathcal{N}_2$ &100& 3,3&$10^{-2}$&36 (22-55)/ - & -&45(36-65)\\
     $\mathcal{N}_2$ &500& 3,3&$10^{-2}$&436 (228-862) / - & -&-\\
 \midrule
 
  $\mathcal{N}_3$ &10& 3,3&$10^{-2}$&1.2 (0.5-2.3) / 22 (11-43) & 72 (43-94)&0.2(0.1-0.3)\\
      $\mathcal{N}_3$ &100& 3,3&$10^{-2}$ &7 (5-8)/ 91 (76-111) & -&10 (9-12)\\
        $\mathcal{N}_3$ &500& 3,3&$10^{-2}$&11 (9-16) / 161 (104-226) &-&-\\
      $\mathcal{N}_3$ &1000& 3,3&$10^{-2}$&25 (22-29) / 176 (149-224) & -&-\\
       $\mathcal{N}_3$ &2000& 3,3&$10^{-2}$&93 (91-100) / 578 (429-691) & -&-\\
    
    \bottomrule
  \end{tabular}
  
  \end{adjustbox}
\end{table}

\begin{table}
  \caption{Computational efficiency compared with local search \cite{peyre2016gromov}. Computational time on the format [mean (low - high)] on three specific problems which contain near symmetries and the number of random initializations needed to achieve the required accuracy on the format [mean (low - high)]. The number of initializations were limited to 1000.}

\setlength{\tabcolsep}{4pt} 
  \label{table:Complocal}
  \centering
   \begin{adjustbox}{center}
  \begin{tabular}{lccllllll}
    \toprule 
    &&&Rel.&Algorithm 1&\multicolumn{3}{c}{Local search}\\
    Type & $n$ & $\ell_x,\ell_y$  &error $\epsilon$  &Exec. time [s] & Exec. time [s] &Initializations&Sucessful runs\\
   
    \midrule
    $\mathcal{U}$ &100 & 2,2  &$10^{-6}$& 0.5 (0.4-0.6)  & 4 (0.3-12.7)&64 (4-205)&5 \\
    $\mathcal{U}$ &200 & 2,2  &$10^{-6}$& 1.6 (1.0-1.9)& 27 (13-42)& 129 (59-197)&5\\  
    $\mathcal{U}$ &300 & 2,2  &$10^{-6}$& 3.2 (2.7-3.9)& 174 (33-458)& 366 (69-959)&5\\
    $\mathcal{U}$ &400 & 2,2  &$10^{-6}$&6 (5-8)& 322 (97-685)& 321 (97-781) &3\\
    
     \midrule
     $\mathcal{U}$ &100 & 2,3  &$10^{-6}$& 352 (99-669)  & 23 (8-60)&341 (110-853) &5\\
     $\mathcal{U}$ &200 & 2,3  &$10^{-6}$&1238 (643-2612) & 92 (11-168)&421(49-778)&3\\
      $\mathcal{U}$ &300 & 2,3  &$10^{-6}$&3006 (601-4908) & 131 (4-344)& 270 (10-710) &5\\
       $\mathcal{U}$ &400 & 2,3  &$10^{-6}$&4729 (4279-4868) & 367 (5-343)&365 (1-859)&5\\
     \midrule
    $\mathcal{N}_1$ &100 & 2,2  &$10^{-6}$& 0.5 (0.3-0.7)  & 0.6 (0.2-1.0)&10 (3-16)&5 \\
    $\mathcal{N}_1$ &200 & 2,2 &$10^{-6}$& 0.9 (0.7-1.1) &13.4 (1-37) & 61 (6-168)&5\\   
    $\mathcal{N}_1$ &300 & 2,2 &$10^{-6}$& 2.4 (2.0-2.9) &37.5 (0.5-90) & 75 (1-182)&5\\   
    $\mathcal{N}_1$ &400 & 2,2 &$10^{-6}$&4.4 (3.6-5.3) &149 (22-378)& 142 (21-361)&4\\   
    
 \bottomrule
  \end{tabular}
   \end{adjustbox}
\end{table}

\subsection{Convergence rate}
The convergence proof in Proposition 3 does not include a rate of convergence. In Figure~\ref{fig:relative_error_proposed }, we show the convergence trajectory for the problems $\mathcal{U}$ and $\mathcal{N}_1$ for 2-dimensional data. The tests show that the convergence rate is linear to its nature up to a number of iterations where the gap closes completely. 

\begin{figure}
\vskip 0.2in
\begin{center}
\includegraphics[width=0.8\columnwidth]{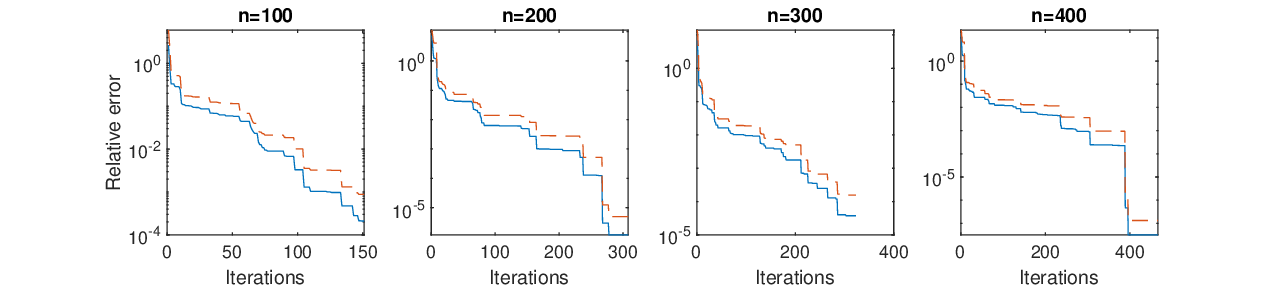}
\includegraphics[width=0.8\columnwidth]{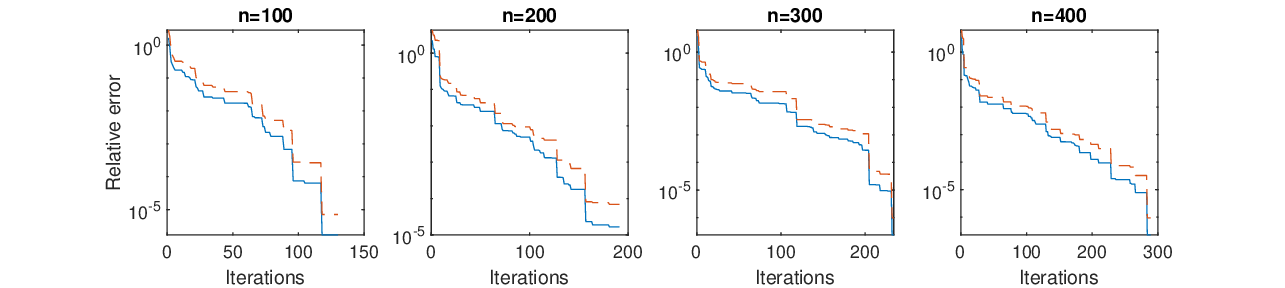}
\caption{Top row: The trajectory of the relative error for the proposed method for the problem $\mathcal{U}$ on 2-dimensional data. Bottom row: The trajectory of the relative error for the proposed method for the problem $\mathcal{N}_1$ on 2-dimensional data. The solid blue line indicates the mean convergence rate for 20 runs and the dashed red line indicates 1 standard deviation from the mean.
}
\label{fig:relative_error_proposed }
\end{center}
\end{figure}

\begin{figure}
\vskip 0.2in
\begin{center}
\includegraphics[width=0.8\columnwidth]{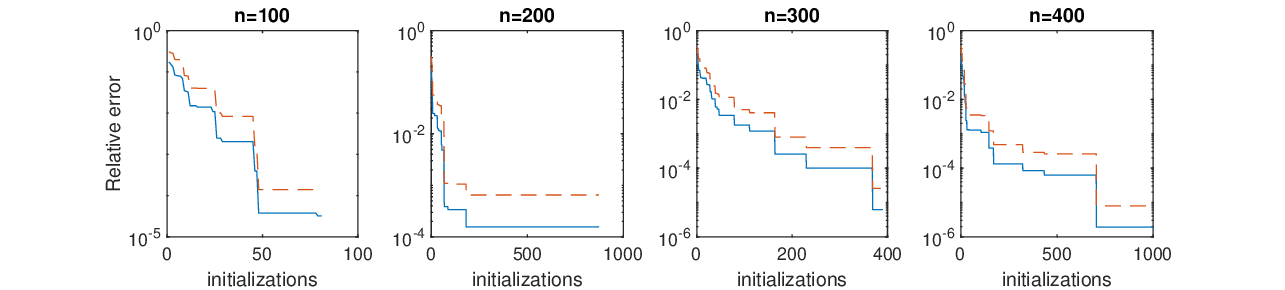}
\includegraphics[width=0.8\columnwidth]{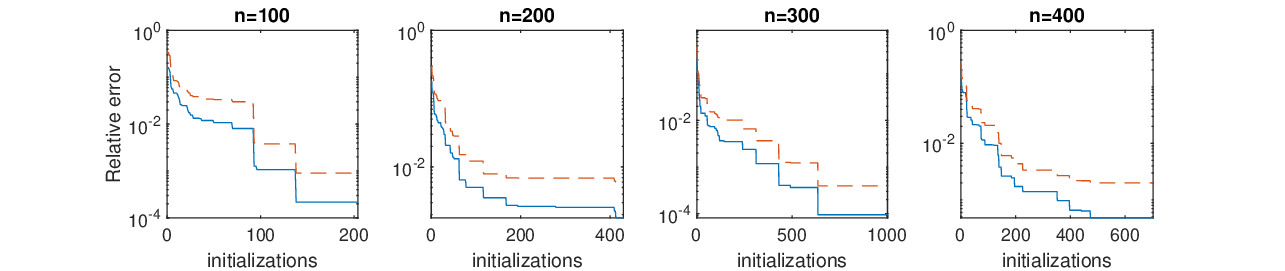}
\caption{Top row: The trajectory of the relative error for the local search method for the problem $\mathcal{U}$ on 2-dimensional data. Bottom row: The trajectory of the relative error for the local search method for the problem $\mathcal{N}_1$ on 2-dimensional data. The solid blue line indicates the mean convergence rate for 20 runs and the dashed red line indicates 1 standard deviation from the mean.}
\label{fig:relative_error_pey}
\end{center}
\end{figure}

\subsection{Comparison with local search method}

We compare the results using the proposed method to a local search method \cite{peyre2016gromov} which is run with random initializations (including the first lower bound \cite{memoli2011gromov} used with success in \cite{scetbon2022linear}) until the relative error to the global optimum is less than a specific tolerance $\epsilon$. The problems are the same as in the previous section.
Note here that in the local search methods we need an oracle in order to determine when we have reached a given performance level (which of course is not available in practice), whereas we in the proposed method computes upper and lower bounds. 

Results are presented in Table \ref{table:Complocal}.
The results show that the proposed method performs better than multi-starting the local method on the test problems on 2-dimensional data. 
For the matching of 2-dimensional data to 3-dimensional data, the local search method is surprisingly fast suggesting that the problems is of a completely different nature than when 2-dimensional data is matched to 2-dimensional data. This could be due to the method for solving the concave quadratic problem, for which the number of extreme points grow very fast in higher dimensions ($\ell_x$ or $\ell_y$ larger than 2). In future work we will explore other strategies which involves tighter cuts and solving this problem using other branch and bound based methods.

Figure \ref{fig:relative_error_pey} shows the trajectory of convergence. The tests show that the rate of convergence is sublinear to its nature. The number of initializations needed to achieve a pre-determined accuracy increase with the number of points for the local search. Note here that in order to determine when to stop one needs to know the optimal value, which is not available for the local search method.

\subsection{Application to symmetrical data for morphological analysis}

In this example we investigate the impact of correctly evaluating the Gromov-Wasserstein discrepancy compared to estimating it by local search.
As a test case, we examine the ability to classify Adeno Associated Viral (AAV) particles based on the Gromov-Wasserstein discrepancy.
AAV particles are nearly round viral particles with multiple near-rotational symmetries as illustrated in figure (\ref{fig:AAVs}). 
By sampling $n$ positions on each AAV particle proportional to the protein density, the point sets from pairs of particles $X_i$  and $X_j$ can subsequently be compared using the Gromov-Wasserstein discrepancy. 

Computing the Gromov-Wasserstein discrepancy between all objects in a large set $\mathcal{X}= \{X_i\}_i^N$ is tedious. Therefore, one may consider calculating the discrepancy to a subset of the objects that are well distributed under the Gromov-Wasserstein discrepancy. To find such a subset without actually calculating all pairs of discrepancies, we use a greedy approach by defining the index subset $S_k= \{s_i\}_{i=1}^k$ by selecting the first object arbitrarily and then let the set grow by
\begin{equation}
S_{k+1} := \{S_k, \argmax_{i \not\in S_k} \min_{j \in S_k} d_{GW}(X_i,X_j)\}.
\label{eq:sparseseq}
\end{equation}

In this way all objects are closer than a tolerance to an object in the subset, and the way the subset is produced generates a monotonically decreasing tolerance. By using this procedure, every object obtains a feature vector of distances to the objects indexed by $S_k$. Next the feature vectors are used as input to a k-means clustering and classification quality in terms of purity, adjusted rand index and normalized mutual information compared to an expert evaluation is presented in figure (\ref{fig:AAVs}).

The example shows that if the data contain symmetries, local search methods may get stuck on permutations that are locally optimal, but are far from globally optimal. In the left subfigure of figure (\ref{fig:AAVs}), self-similarities are visited in the proposed method, and these local optima are in fact also almost optimal when used as initiation point using local search methods e.g., \cite{peyre2016gromov}. 

In figure~\ref{fig:AAVtests}, the relative error of the local search method is presented and compared with the global optimal result. As is shown, the distance provided by the local method is fast, but the inaccuracy may cause lack in information resolution when the distance is used for consecutive clustering and decision making.

To conclude, this example shows that when the Gromov-Wasserstein problem is calculated accurately it provides valuable information and biological meaning as it differentiates viral particles with different cargo and variations in capsid structure, and, at the same time, finds the optimal orientation positively revealing possible chiralities or isomerisms. It also shows that when the distance is calculated accurately, it provides better decision support than using local search methods.

\begin{figure}[ht]
\vskip 0.2in
\begin{center}
\includegraphics[width=0.35\columnwidth]{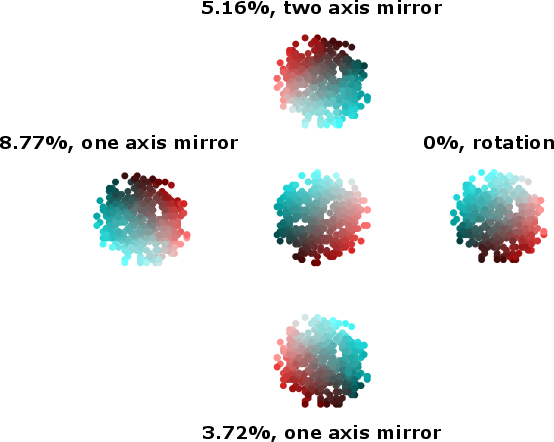}
\includegraphics[width=0.64\columnwidth]{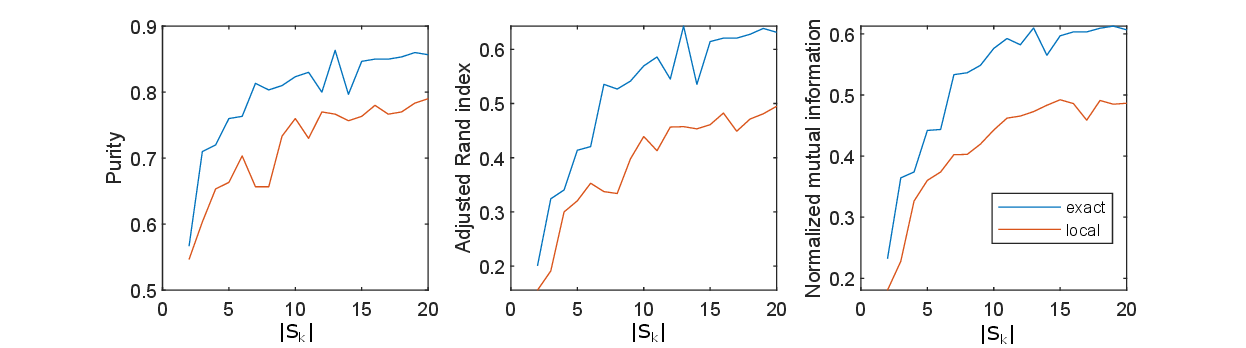}
\caption{Right: Matching of the structure to itself to four different orientations visited by the algorithm where the color indicate the permutation $\Gamma$. The relative error of the Gromov-Wasserstein-discepancy and the isometry to the global optimum is written near each matching. The global optimum is able to correctly match to itself.
Left: The trajectory of the increased quality of classification compared to an expert evaluation when the distance is computed from all particles to the sequence of particles suggested in the text. The gain of quality using an exact evaluation (blue) of the Gromov-Wasserstein problem is unambiguous over the local search (red). } 
\label{fig:AAVs}
\end{center}
\end{figure}

\begin{figure}[ht]
\vskip 0.2in
\begin{center}
\includegraphics[width=0.6\columnwidth]{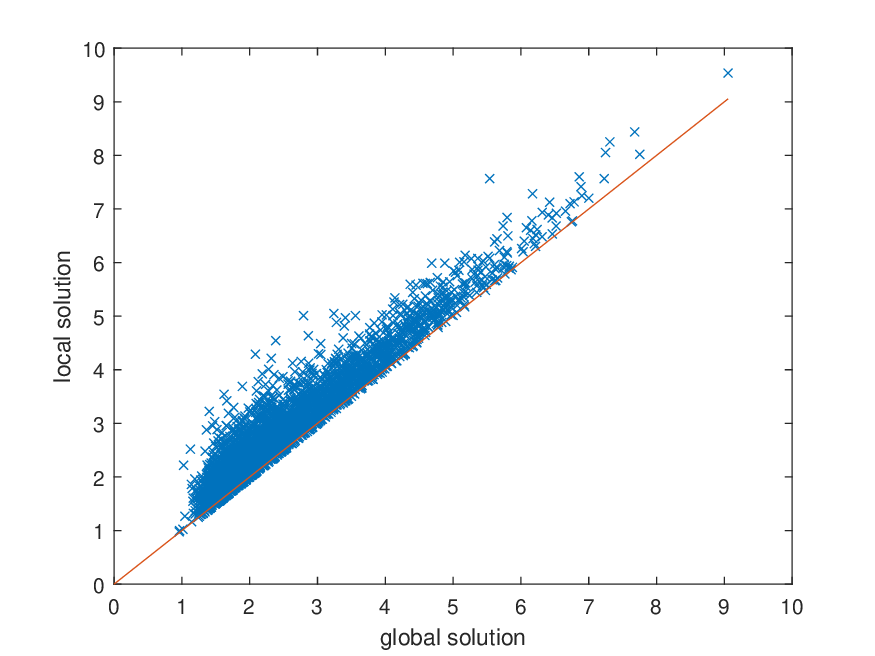}
\includegraphics[width=0.3\columnwidth]{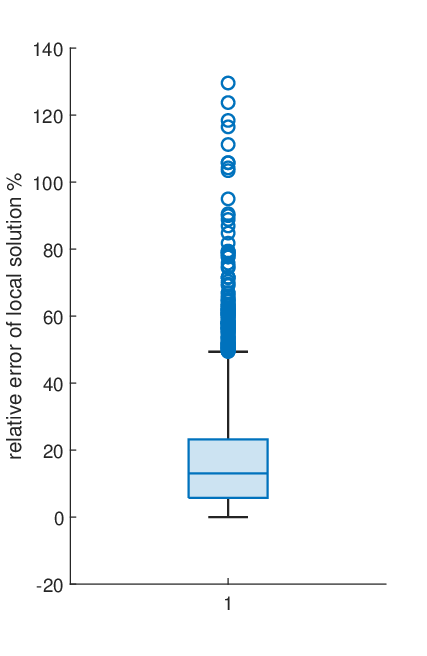}
\caption{Left: Each distance calculated with the proposed method and the same distance calculated with the local method. For near symmetrical data, the confusion of the measurements of the local method is clear. Right: The relative error of the local method compared to the global solution.}
\label{fig:AAVtests}
\end{center}
\end{figure}

\section{Discussion}
When using distances as input for statistical analyses, the accuracy of the measurement set a bound for the information resolution. If the measurement system introduces error of a certain structure, this can produce artefacts in the result and affect decisions taken on the result. When using distances for such purposes, it is necessary to either know the measurement error, the artefacts being produced, or using an accurate measurement system. In this paper we have provided a method which computes the Gromov-Wasserstein problem accurately, which reduces the uncertainty of such considerations.

\section*{Acknowledgements and Disclosure of Funding}
This work was funded by Vironova AB and perfomed under the innovation milieu GeneNova (2021-02640) funded by the Swedish innovation agency Vinnova. The data for the example was provided by Vironova.

\bibliography{bibfile2}
\bibliographystyle{plain}

\end{document}